\newcommand{\includegraphics}[2][]{\fbox{}}
\DeclareMathAlphabet{\pazocal}{OMS}{zplm}{m}{n} 
\definecolor{blue}{rgb}{0,0,1}
\definecolor{red}{rgb}{1,0,.2}
\theoremstyle{plain}
\newtheorem{thm}{Theorem}[section]
\newtheorem{lem}[thm]{Lemma}
\theoremstyle{definition}
\newtheorem{defn}[thm]{Definition}
\newtheorem{remark}[thm]{Remark}
\newtheorem{example}[thm]{Example}
\numberwithin{equation}{section}
\newcommand{\N}{\ensuremath{\mathbb{N}}}
\newcommand{\R}{\ensuremath{\mathbb{R}}}
\DeclareMathOperator{\K}{\mathcal{K}}
\newcommand{\eps}{\varepsilon}
\newcommand{\e}{\varepsilon}
\author[Alex McDonald]{Alex McDonald }
\address{Alex McDonald, Department of Mathematics, The Ohio State University}
\email{mcdonald.996@osu.edu}
\author[Krystal Taylor]{Krystal Taylor}
\address{Krystal Taylor, Department of Mathematics, The Ohio State University}
\email{taylor.2952@osu.edu}
\thanks{Taylor is supported in part by the Simons Foundation Grant 523555.}
\title{\parbox{14cm}{\centering{ 
Infinite Constant gap length trees in Products of Thick Cantor Sets 
}}}
\begin{document}

\maketitle
\begin{abstract}
We show that products of sufficiently thick Cantor sets generate trees in the plane with constant distance between adjacent vertices. 
Moreover, we prove that the set of choices for this distance has non-empty interior. 
We allow our trees to be countably infinite, which further distinguishes this work from previous results on patterns in fractal sets. 
This builds on the authors' previous work on graphs and distance sets over products of Cantor sets of sufficient Newhouse thickness.  
\end{abstract}  
\maketitle

\section{Introduction} 
The study of finite point configurations within sets satisfying certain size conditions is a classic subject matter.  
The basic problem is to determine minimal size conditions satisfied by a subset of Euclidean space that guarantee the existence or abundance of various finite point configurations within the set.  
For subsets of the integers, size is quantified using cardinality or density.
For instance, in \cite{Z06} Ziegler showed that one can recover every simplex similarity type and all sufficiently large scalings inside subsets of $\R^d$ of positive upper density. 
Her result builds on previous work by Furstenberg, Katznelson, and Weiss on configurations in sets of positive density \cite{FKW90}.  
In the continuous setting, it is a consequence of the Lebesgue density theorem that subsets of $\R^n$ of positive Lebesgue measure contain a translated and scaled copy of every finite point set for an interval worth of scalings \cite{Steinhaus20}.  
\\

In the fractal setting, 
the size of a set may be quantified by notions of dimension, such as Hausdorff dimension. 
A number of works have appeared guaranteeing the existence 
\cite{CLP16, IosLiu, IosMag}
or abundance 
\cite{GM22, GGIP, GIP, GIT19, GIT21, Areas}
of certain point configurations within sets satisfying sufficient dimensional assumptions.  
In \cite{BIT16}, Bennett, Iosevich, and the second listed author establish that a set of sufficiently large Hausdorff dimension contains arbitrarily long chains with vertices in the set.  In more detail, define
the set of $k$-chains of distances determined by a set $E$ by
\begin{equation}
\label{distancechain}
\{(|x^1-x^2|,\dots,|x^k-x^{k+1}|)\in \R^k:x^i\in E \text{ distinct}\}.
\end{equation}
It is established in \cite{BIT16} that if the Hausdorff dimension of $E$ is greater than $\frac{d+1}{2}$, then the
above set has non-empty interior in $\R^k$. 
We refer to such a result as an \textit{abundance result}, as it gives dimensional assumptions that guarantee the abundance of chains within a set. 
Moreover, the authors of \cite{BIT16} prove
if the Hausdorff dimension of a set $E\subset \R^d$ is greater than $\frac{d+1}{2}$, then for each $k\in \N$, there exists a non-empty open interval $I$ so that for each $t\in I$, 
$E$ contains a $k$-chain of \textit{constant} gap length $t$ (i.e., a chain satisfying 
$|x^i - x^{i+1}|=t$; Figure \ref{chain}). 
We refer to this as an \textit{existence result}.  One can analogously define a constant gap tree (Figure \ref{tree}).  In a subsequent work, Iosevich and the second listed author generalized the results of \cite{BIT16} to arbitrary tree configurations, see \cite{Trees}.  
\\
\begin{figure}[t!]
\begin{subfigure}[b]{0.45\linewidth}
\centering
\begin{tikzpicture}
\draw [fill] (0,0) circle [radius=0.1];
\draw [fill] (1,3) circle [radius=0.1];
\draw [fill] (3,.551) circle [radius=0.1];
\draw [fill] (4,3.551) circle [radius=0.1];

\draw[red] (0,0)--(1,3)--(3,.551)--(4,3.551);

\node[below] at (0,0) {$x^1$};
\node[above] at (1,3) {$x^2$};
\node[below] at (3,.551) {$x^3$};
\node[above] at (4,3.551) {$x^4$};

\end{tikzpicture}
\caption{Chain with constant gap length}
\label{chain}
\end{subfigure}
\begin{subfigure}[b]{0.45\linewidth}
\centering
\begin{tikzpicture}
\draw [fill] (3,3) circle [radius=0.1];

\draw [fill] (1,2) circle [radius=0.1];
\draw [fill] (3,.764) circle [radius=0.1];
\draw [fill] (5,2) circle [radius=0.1];

\draw [fill] (0,0) circle [radius=0.1];
\draw [fill] (2,-1.236) circle [radius=0.1];
\draw [fill] (3,-1.472) circle [radius=0.1];
\draw [fill] (4,-1.236) circle [radius=0.1];
\draw [fill] (6,0) circle [radius=0.1];
\draw [fill] (7,1) circle [radius=0.1];

\draw[red] (3,3)--(1,2);
\draw[red] (3,3)--(3,.764);
\draw[red] (3,3)--(5,2);
\draw[red] (1,2)--(0,0);
\draw[red] (3,.764)--(2,-1.236);
\draw[red] (3,.764)--(3,-1.472);
\draw[red] (3,.764)--(4,-1.236);
\draw[red] (5,2)--(6,0);
\draw[red] (5,2)--(7,1);

\node[above] at (3,3) {$x^1$};
\node[above left] at (1,2) {$x^2$};
\node[above left] at (3,.764) {$x^3$};
\node[above right] at (5,2) {$x^4$};
\node[below] at (0,0) {$x^5$};
\node[below] at (2,-1.236) {$x^6$};
\node[below] at (3,-1.472) {$x^7$};
\node[below] at (4,-1.236) {$x^8$};
\node[below] at (6,0) {$x^9$};
\node[below] at (7,1) {$x^{10}$};
\end{tikzpicture}
\caption{Tree with constant gap length}
\label{tree}
\end{subfigure}
\caption{Chains and trees}

\end{figure}
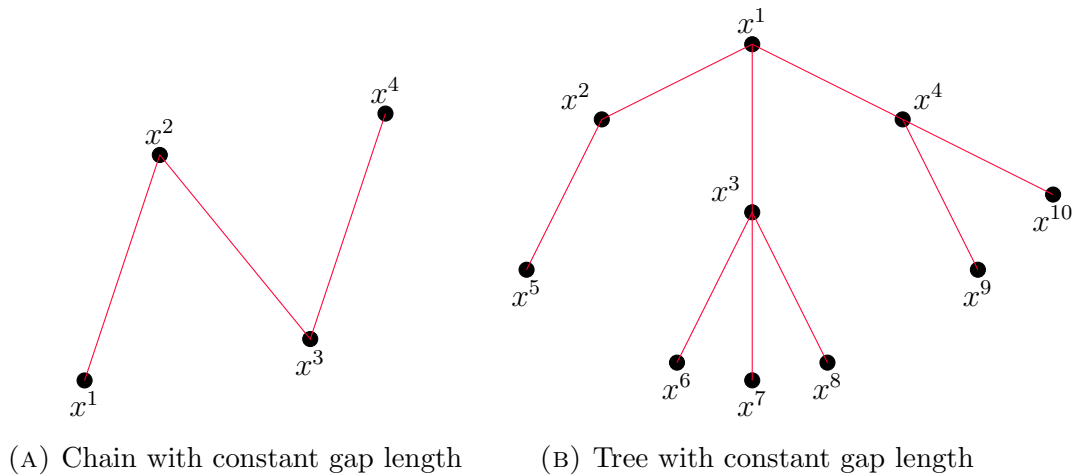

In this article, we study the existence and abundance of chains and trees of constant gap length under a different notion of size in the fractal setting known as \textit{Newhouse thickness}.
Assumptions on Newhouse thickness have yielded several interesting results on patterns contained in Cantor sets. 
 The precise definition of the thickness $\tau(K)$ of a Cantor set $K\subset \R$, as well as the precise definition of a Cantor set, will be given in the next section, but we first describe some of this work.
Simon and the second listed author \cite{ST20} proved the following foundational result.
\begin{thm}[Simon-Taylor, \cite{ST20}]
\label{STthm}
Let $K_1,K_2\subset\R$ be Cantor sets satisfying $\tau(K_1)\tau(K_2)>1$.  For any $x\in\R^2$, the pinned distance set
\[
\Delta_x(K_1\times K_2):=\{|x-y|:y\in\K_1\times K_2\}
\]
has non-empty interior.
\end{thm}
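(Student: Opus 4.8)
The plan is to reduce the statement to the Newhouse gap lemma: if two Cantor sets $A,B\subset\R$ satisfy $\tau(A)\tau(B)>1$, then $A\cap B\ne\emptyset$ unless one of them is contained in a bounded gap of the other, or their convex hulls are disjoint. Translating everything by $-x$ (which replaces $K_1\times K_2$ by a translate, preserving thickness) we may assume $x=0$. Then for $y=(y_1,y_2)\in K_1\times K_2$ we have $|y|^2=y_1^2+y_2^2$, so
\[
\{|y|^2 : y\in K_1\times K_2\}=F(K_1)+F(K_2),\qquad F(s):=s^2,
\]
where $+$ denotes the sumset. Since $u\mapsto\sqrt u$ is an increasing homeomorphism of $[0,\infty)$, it is enough to exhibit an open interval contained in $F(K_1)+F(K_2)$.

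The first step is to \emph{localize and linearize}. The map $F$ is a $C^\infty$ diffeomorphism with nonvanishing derivative on each of $(-\infty,0)$ and $(0,\infty)$, and $K_i$ (being uncountable) contains a point $p_i\ne 0$. Fix a small $\eta>0$. Since the bridges of the defining sequence of $K_i$ containing $p_i$ shrink to $\{p_i\}$, we may choose such a bridge $I_i$ with $|I_i|<\eta$ and $0\notin I_i$; then $\widehat K_i:=K_i\cap I_i$ is a Cantor set, and since the defining sequence of $K_i$ restricts to one for $\widehat K_i$ we have $\tau(\widehat K_i)\ge\tau(K_i)$. On $I_i$ the derivative $F'$ is bounded away from $0$ while $F''\equiv 2$, so a mean value comparison of $|F(B)|$ with $|F(G)|$, for a gap $G$ of $\widehat K_i$ and an adjacent bridge $B$ (which together lie in $I_i$, hence are separated by less than $\eta$), gives
\[
\tau\bigl(F(\widehat K_i)\bigr)\ \ge\ \tau(\widehat K_i)\,(1-C_i\eta)\ \ge\ \tau(K_i)\,(1-C_i\eta)
\]
with $C_i$ depending only on $p_i$. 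As $\tau(K_1)\tau(K_2)>1$ \emph{strictly}, we may pick $\eta$ so small that $A:=F(\widehat K_1)$ and $B:=F(\widehat K_2)$ are Cantor sets with $\tau(A)\tau(B)>1$.

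The second step is the standard sumset argument. For $c\in\R$, $c\in A+B$ if and only if $A\cap(c-B)\ne\emptyset$, and $c-B$ is an isometric copy of $B$, so $\tau(c-B)=\tau(B)$ and $\tau(A)\tau(c-B)>1$. Let $[a,a']$ and $[b,b']$ be the convex hulls of $A$ and $B$; the hull of $c-B$ is $[c-b',c-b]$. For every $c$ in the nonempty open interval $\bigl(\max(a+b',\,a'+b),\ a'+b'\bigr)$ one checks $a<c-b'<a'<c-b$, so the hulls of $A$ and $c-B$ overlap with neither contained in the other; in particular their hulls are not disjoint and neither set lies in a bounded gap of the other. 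The gap lemma then forces $A\cap(c-B)\ne\emptyset$, i.e. $c\in A+B$. Hence $A+B$ contains an open interval $J\subset[0,\infty)$, so $\Delta_0(K_1\times K_2)$ contains the open interval $\{\sqrt u:u\in J\}$; undoing the translation, $\Delta_x(K_1\times K_2)$ has nonempty interior.

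The main obstacle is the linearization estimate — making rigorous that a $C^2$ change of variables distorts Newhouse thickness by a factor tending to $1$ as the diameter of the domain tends to $0$ — together with the (essentially tautological, but worth stating carefully) fact that $\tau(K_i\cap I_i)\ge\tau(K_i)$ for bridges $I_i$ of the defining sequence. This is exactly where the strict inequality $\tau(K_1)\tau(K_2)>1$, rather than merely $\ge 1$, is consumed; the remainder of the argument is robust.
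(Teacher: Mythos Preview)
The paper does not supply its own proof of this theorem; it is quoted from Simon--Taylor \cite{ST20} as background.  Your argument is correct and is essentially the Simon--Taylor strategy as it is reflected in this paper's toolkit: your localization step (passing to $K_i\cap I_i$ for a small bridge $I_i$) is exactly Lemma~\ref{bubble}, and your ``linearization estimate'' (a $C^1$ diffeomorphism on a short interval distorts thickness by a factor $1+o(1)$) is exactly Lemma~\ref{image}, which the paper attributes to \cite[Lemma~3.8]{ST20}.  The only cosmetic difference is in the reduction itself: the paper, following \cite{ST20}, phrases ``$t$ is a pinned distance'' as $K_2\cap g_t(K_1)\ne\emptyset$ for the half-circle map $g_t(s)=-\sqrt{t^2-s^2}$, whereas you square first and write the squared-distance set as the sumset $F(K_1)+F(K_2)$ with $F(s)=s^2$, then intersect $F(K_1)$ with $c-F(K_2)$.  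Both routes apply the same thickness-preservation lemma to a smooth monotone map and then invoke the Newhouse gap lemma, so they are equivalent in substance.  Your closing remark correctly identifies the $\varepsilon$-thickness computation behind Lemma~\ref{image} as the only place where real work is hidden; note in particular that the naive mean-value comparison you sketch does \emph{not} by itself show $F(B)$ is still a bridge of $F(\widehat K_i)$, and it is precisely the $\varepsilon$-bridge device in the proof of Lemma~\ref{image} that repairs this.
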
 
In our previous paper \cite{MT21} we generalize Theorem \ref{STthm} from single distances to finite trees, showing that if $K_1, K_2$ satisfy the thickness condition in that theorem then the set of ``distance trees'' generated by $K_1\times K_2$ has non-empty interior (see the next section for precise definitions).  
In another direction, Yavicoli \cite{Y21} proved that for any finite point configuration in $\R$, certain sets above a thickness threshold contain a similar copy of that configuration (the threshold depends on the configuration, and is generally very large). 
In \cite{Yav22}, Yavicoli gives a definition of thickness in $\R^d$ that works even for totally disconnected sets satisfying a uniform denseness assumption, and proves there that sufficiently thick sets generate an interval worth of distances in any direction. 
Thickness theorems have applications in dynamical systems and fractal geometry \cite{Takahashi17}, as well as number theory \cite{Jiang22, Yu20}.
\\
%

The results of this article are a continuation of our work in \cite{MT21}.  In that work we showed that the hypothesis of Theorem \ref{STthm} was enough to ensure an abundance of chains and trees, and we posed the question of whether one could also obtain chains and trees with constant gap length.  In this article we partially answer this question in the affirmative.

\subsection{Acknowledgements}

We are grateful to the referee for all of their helpful comments and efforts to improve this manuscript.

\section{Definitions \& Main results}

Let $G=(V,\mathcal{E})$ be a graph, and let $\sim$ denote the adjacency relation (i.e., $i\sim j$ if and only if $i,j\in V$ and $(i,j)\in  \mathcal{E}$.  Given a set $E\subset\R^d$, define
\[
\Delta_G(E):=\{(|x^i-x^j|)_{i\sim j}: x^i \in E, x^i\neq x^j \text{ if } i\neq j\},
\]
where $\sim$ denotes the adjacency relation on the graph $G$, and $(|x^i-x^j|)_{i\sim j}$ is the vector with coordinates $|x^i-x^j|$ indexed by the edges of $G$.  In \cite{MT21}, the authors prove that if $G=T$ is a tree with $k+1$ vertices and $k$ edges, and $E=K_1\times K_2$ is a product of Cantor sets $K_1,K_2\subset \R$ with thickness (Definition \ref{thicknessdfn} below) satisfying $\tau(K_1)\cdot \tau(K_2)>1$, then $\Delta_T(K_1\times K_2)$ has non-empty interior.  In this paper, we consider a variant of this problem.  Given $E\subset\R^d$ and $t\in \R$, let $G_t(E)$ denote the graph with vertex set $E$ and an edge connecting $x,y\in E$ if and only if $|x-y|=t$.  Given a finite or infinite graph $G$, define
\[
\Delta_G'(E)=\{t\in\R: G_t(E) \text{ contains a subgraph isomorphic to } G\}.
\]
Note that two graphs are isomorphic if there is a bijection between their vertex sets which preserves adjacency.  This new distance set corresponds to the diagonal elements of the previous distance set:
\[
\Delta_G'(E)=\{t\in\R:(t,\dots,t)\in\Delta_G(E)\}.
\]
Accordingly, we refer to it as the \textit{diagonal} distance set. 

\begin{defn}
A \textbf{tree} is a connected acyclic graph.  
Equivalently, $T$ is a tree if and only if any two distinct vertices are connected by exactly one path. 
\end{defn}

\begin{defn}
\label{thicknessdfn}
A \textbf{Cantor set} is a non-empty subset of $\R^d$ which is compact, perfect, and totally disconnected.  A \textbf{gap} of a Cantor set $K\subset \R$ is a connected component of the complement $\R\setminus K$.  If $u$ is the right endpoint of a bounded gap $G$, for $b\in \R\cup \{\infty\}$, let $(a,b)$ be the closest gap to $G$ with the property that $u<a$ and $|G|\le b-a$ (Figure \ref{thicknessfig}).  The interval $[u,a]$ is called the \textbf{bridge} at $u$ and is denoted $B(u)$.  Analogous definitions are made when $u$ is a left endpoint.  The \textbf{thickness} of $K$ at $u$ is the quantity
\[
\tau(K,u):=\frac{|B(u)|}{|G|}.
\]
Finally, the thickness of the Cantor set $K$ is the quantity
\[
\tau(K):=\inf_u \tau(K,u),
\]
the infimum being taken over all endpoints $u$ of bounded gaps.
\end{defn}
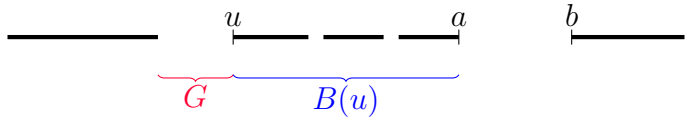
\begin{figure}[h]
\centering
\begin{tikzpicture}
\draw[ultra thick] (0,0)--(2,0);
\draw[ultra thick] (3,0)--(4,0);
\draw[ultra thick] (4.2,0)--(5,0);
\draw[ultra thick] (5.2,0)--(6,0);
\draw[ultra thick] (7.5,0)--(9,0);

\draw (3,-.1)--(3,.1);
\draw (6,-.1)--(6,.1);
\draw (7.5,-.1)--(7.5,.1);

\node[above] at (3,0) {$u$};
\node[above] at (6,0) {$a$};
\node[above] at (7.5,0) {$b$};

\draw [decorate,
	decoration = {brace}, red] (3,-.5) --  (2,-.5);
\draw [decorate,
	decoration = {brace}, blue] (6,-.5) --  (3,-.5);
	
\node[below, red] at (2.5,-.5) {$G$};
\node[below, blue] at (4.5, -.5) {$B(u)$};
\end{tikzpicture}
\caption{A gap and corresponding bridge}
\label{thicknessfig}
\end{figure}

It is worth noting that large thickness implies large Hausdorff dimension but not conversely.  
Specifically, one can prove the bound (see \cite[pg. 77]{PTbook}):
\[
\dim_{\rm H}(K) \geq \frac{\log{2}}{\log{\left( 2 + \frac{1}{\tau(K)} \right) }}.
\]

%
%
In \cite{MT21}, we studied the distance set $\Delta_T(K_1\times K_2)$ using the Newhouse gap lemma; this lemma states that if sets $K_1,K_2$ satisfy the condition $\tau(K_1)\tau(K_2)>1$, then either one set is contained in the other or their intersection is non-empty. 
In order to study the diagonal distance set $\Delta_T'(K_1\times K_2)$, however, we will need to ensure the intersection $K_1\cap K_2$ contains many points of a certain type.  While the Newhouse gap lemma gives non-empty intersection, it is possible for the intersection to be a single point (for example, see \cite{William}), and the gap lemma does not give us much control over what that point is.  Therefore, we will use the following result of Hunt, Kan, and Yorke \cite{HKY93}, which gives a thickness condition which ensures robust intersection.  We start by defining this thickness condition.
%
\begin{defn}
\label{HKYdfn}
The pair $(\tau_1,\tau_2)\in\R_{>0}^2$ is said to satisfy the \textbf{Hunt-Kan-Yorke thickness condition} (briefly, the HKY condition) if the following inequalities hold:
\begin{align*}
\tau_1&\geq \tau_2 \\
\tau_1&> \frac{\tau_2^2+3\tau_2+1}{\tau_2^2} \\
\tau_2&> \frac{(2\tau_1+1)^2}{\tau_1^3}.
\end{align*}
We say Cantor sets $K_1$ and $K_2$ satisfy the Hunt-Kan-Yorke thickness condition if either $(\tau(K_1),\tau(K_2))$ or $(\tau(K_2),\tau(K_1))$ satisfy the condition.
\end{defn}
\begin{remark}
The following observations are easy to verify:
\begin{itemize}
\item The Hunt-Kan-Yorke condition is monotone: if $(\tau_1,\tau_2)$ satisfies the condition and $\tau_1'\geq\tau_1,\tau_2'\geq\tau_2, \tau_1'\geq\tau_2'$, then $(\tau_1',\tau_2')$ satisfies the condtion as well.
\item The Hunt-Kan-Yorke condition implies the Newhouse condition $\tau_1\cdot \tau_2>1$ (in fact, it implies $\tau_1\cdot\tau_2> 5$), but not conversely.  More generally, there is no constant $c$ such that $\tau_1\cdot \tau_2>c$ implies the Hunt-Kan-Yorke condition.
\item If $\tau_1\geq \tau_2>\sqrt{2}+1$, then $(\tau_1,\tau_2)$ satisfies the Hunt-Kan-Yorke condition.
\item For any $\tau_2>0$, there exists $\tau_1>0$ such that $(\tau_1,\tau_2)$ satisfies the Hunt-Kan-Yorke condition.  As $\tau_2\to 0$, we must have $\tau_1\gtrsim 1/\tau_2^2$, so this is asymptotically worse then the Newhouse thickness condition $\tau_1>1/\tau_2$.
\end{itemize}
\end{remark}
This stronger thickness condition guarantees a more robust intersection of the sets $K_1$ and $K_2$ than is given by the Newhouse gap lemma.

\begin{defn}
Cantor sets $K_1,K_2\subset \R$ are said to be \textbf{interleaved} each has non-empty intersection with the interior of the convex hull of the other.
\end{defn}

\begin{lem}[Hunt-Kan-Yorke intersection theorem]
\label{HKY}
Let $K_1,K_2\subset\R$ be interleaved Cantor sets satisfying the Hunt-Kan-Yorke thickness condition.  Then, there exists a Cantor set $K\subset K_1\cap K_2$ satisfying $\tau(K)>0$.
\end{lem}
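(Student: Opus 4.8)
The plan is to build the Cantor set $K \subset K_1 \cap K_2$ iteratively by a bridge-refinement scheme, mimicking the proof of the Newhouse gap lemma but doing book-keeping carefully enough to see that what survives is a genuine Cantor set with positive thickness. First I would recall the gap structure: each $K_i$ is determined by a nested sequence of ``trimmed'' closed intervals, where at each stage an open gap is removed from the middle of a bridge, and the defining inequalities of the Hunt-Kan-Yorke condition are exactly what is needed so that whenever a bridge of $K_1$ straddles an endpoint of a gap of $K_2$ (or vice versa), the two sets are still interleaved on a pair of sub-bridges. The starting point is the interleaving hypothesis, which gives a first pair of bridges $B_1 \subset \mathrm{hull}(K_1)$, $B_2 \subset \mathrm{hull}(K_2)$ that overlap in an interval $I_0$ of positive length meeting both sets.

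Next I would set up the recursion: given a pair of bridges $(B_1^{(n)}, B_2^{(n)})$ from the two sets whose intersection $I_n$ has positive length and is interleaved with both, I look at the first (largest) gap either set removes inside $I_n$. If only one set, say $K_1$, has a gap $G$ there, then $I_n \setminus G$ splits into (at most) two intervals, and I claim the HKY inequalities force at least one of them to again contain a bridge of $K_1$ overlapping a bridge of $K_2$ in an interval of positive length, interleaved with both — this is precisely the quantitative content of the Hunt-Kan-Yorke estimates (the ratios $\frac{\tau_2^2+3\tau_2+1}{\tau_2^2}$ and $\frac{(2\tau_1+1)^2}{\tau_1^3}$ are engineered so that the bridge on the far side of the gap is long enough relative to the next gap). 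Crucially, I want to arrange at each stage that \emph{both} children can be kept whenever the geometry allows, or more simply, to produce at each stage at least two disjoint sub-bridge-pairs after finitely many single-gap steps, so that the limiting set branches and is perfect rather than a single point. I would make this precise by defining, for a pair of interleaved bridges, a ``next generation'' consisting of all maximal sub-bridge-pairs obtained after removing the first gap of each set that lands strictly inside the common interval, keeping only those children that remain interleaved; the HKY condition guarantees this collection is non-empty, and a short argument (using that gaps of a Cantor set accumulate so that bounded gaps appear at all scales inside any bridge) shows that iterating eventually yields branching, hence at least $2^k$ disjoint common intervals after enough steps.

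Then the limit set $K := \bigcap_n \bigcup (\text{surviving common closed intervals at stage } n)$ is a nested intersection of finite unions of compact intervals; it is non-empty and compact by compactness, contained in $K_1 \cap K_2$ because at every stage the surviving intervals avoid the gaps of both sets, totally disconnected because the lengths of the surviving intervals shrink to $0$ (each refinement step strictly decreases the maximal length, by a factor bounded away from $1$ in terms of the thicknesses), and perfect because of the branching arranged above. Finally, to see $\tau(K) > 0$: the set $K$ comes with its own bridge/gap structure inherited from the construction, and because every gap of $K$ is a gap of $K_1$ or of $K_2$ removed from within a surviving bridge whose length is comparable (with constant depending only on $\tau_1,\tau_2$) to the size of that gap, the ratio $|B(u)|/|G|$ for $K$ is bounded below by a positive constant $c(\tau_1,\tau_2)$. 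Putting these together gives that $K$ is a Cantor set with $\tau(K) \geq c(\tau_1,\tau_2) > 0$.

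I expect the main obstacle to be the middle step: showing that the refinement can be carried out so that the surviving set genuinely \emph{branches} and is not merely a decreasing sequence of intervals collapsing to one point. The Newhouse gap lemma itself only produces a point, and the extra HKY strength is what should let one keep two children; making the branching bookkeeping airtight — in particular verifying that after removing a gap from a common interval, the HKY inequalities leave enough room on each side (or at least enough often enough) to continue with an interleaved pair, uniformly in the scale — is the delicate part, and is presumably where the specific algebraic form of Definition \ref{HKYdfn} gets used in full. The length-contraction estimate needed for total disconnectedness and the lower bound on $\tau(K)$ should both fall out of the same quantitative inequalities once the branching step is in hand.
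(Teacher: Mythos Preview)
The paper does not prove this lemma; it is quoted as the Hunt--Kan--Yorke intersection theorem and attributed to \cite{HKY93}, so there is no in-paper proof to compare against. Your job here was only to state and invoke the result.

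That said, your sketch is a plausible outline of the original Hunt--Kan--Yorke argument, but as written it is not a proof: the decisive step---showing that the HKY inequalities in Definition~\ref{HKYdfn} force \emph{two} interleaved sub-bridge pairs (not merely one, as in Newhouse) at every scale, with a uniform lower bound on the bridge/gap ratio---is asserted rather than verified. You correctly identify this as the crux, but ``the ratios are engineered so that the bridge on the far side of the gap is long enough'' is exactly the computation that needs to be done, and it is where the specific algebraic form of the three inequalities is used. Without that calculation you have re-derived Newhouse, not HKY. If you actually want to supply a proof, you would need to carry out the case analysis in \cite{HKY93} (or an equivalent quantitative argument) showing that after removing the largest relevant gap, both complementary pieces remain interleaved with sub-bridges of the other set, and that the resulting ratios are bounded below by a constant depending only on $\tau_1,\tau_2$.
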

Our main theorem is as follows.
\begin{thm}
\label{MT2}
Let $T$ be a countable tree, and let $K_1,K_2\subset\R$ be Cantor sets satisfying the Hunt-Kan-Yorke thickness condition.  For $j=1,2$, if $\widetilde{K}_j\subset K_j$ is a Cantor set which does not contain the minimum or maximum element of $K_j$ and satisfies $\tau(\widetilde{K}_j)\geq \tau(K_j)$, then 
\[
\Delta(\widetilde{K}_1\times \widetilde{K}_2)\subset \Delta_T'(K_1\times K_2).
\]
In particular, by Theorem \ref{STthm}, the set $\Delta_T'(K_1\times K_2)$ has non-empty interior.
\end{thm}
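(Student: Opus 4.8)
The plan is to reduce the theorem, uniformly in the tree $T$, to a single statement about intersections of thick Cantor sets in the line, and then to deduce that statement from the Hunt--Kan--Yorke intersection theorem (Lemma~\ref{HKY}).

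\emph{Step 1: combinatorial reduction.} It suffices to find, for a fixed $t\in\Delta(\widetilde K_1\times\widetilde K_2)$, a nonempty set $\mathcal A\subseteq K_1\times K_2$ such that for every $p\in\mathcal A$ the set $\{q\in\mathcal A:|p-q|=t\}$ contains a Cantor set (in particular is infinite). Indeed, enumerate $V(T)=\{v_1,v_2,\dots\}$ so that each $v_n$ with $n\ge2$ has exactly one neighbour $v_{\pi(n)}$ with $\pi(n)<n$ (root $T$ and list the vertices so that each follows its parent; acyclicity gives uniqueness). Place $\phi(v_1)\in\mathcal A$ arbitrarily, and inductively choose $\phi(v_n)\in\mathcal A\setminus\{\phi(v_1),\dots,\phi(v_{n-1})\}$ with $|\phi(v_n)-\phi(v_{\pi(n)})|=t$; this is possible because $\phi(v_{\pi(n)})\in\mathcal A$ has infinitely many distance-$t$ relatives in $\mathcal A$ and we must avoid only finitely many points. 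The map $\phi$ then exhibits a subgraph of $G_t(K_1\times K_2)$ isomorphic to $T$, so $t\in\Delta_T'(K_1\times K_2)$. (If $\mathcal A$ turns out to split into two pieces joined only by distance-$t$ edges, map the two colour classes of the bipartite graph $T$ to the two pieces and argue the same way.)

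\emph{Step 2: construction of $\mathcal A$.} Fix a realizing pair $a,b\in\widetilde K_1\times\widetilde K_2$ with $|a-b|=t$; since $\widetilde K_j$ omits $\min K_j$ and $\max K_j$, both $a$ and $b$ lie in the \emph{interior} of the bounding box $[\min K_1,\max K_1]\times[\min K_2,\max K_2]$. Assume first that the vector $b-a$ is not parallel to a coordinate axis. Take $\mathcal A:=(N_a\cup N_b)\cap(K_1\times K_2)$, where $N_a,N_b$ are balls around $a,b$ chosen small enough that $N_a\cup N_b$ lies in the interior of the box and the direction of $q'-q$ stays bounded away from both coordinate axes for all $q\in N_a$, $q'\in N_b$. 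The key point is a self-reproduction property: if $p\in\mathcal A\cap N_a$, then the circle $S(p,t):=\{z:|z-p|=t\}$ passes, near $b$ (resp.\ near the point of $K_1\times K_2$ that produced $p$), through a short sub-arc $A$ that lies in the box, whose tangent direction is uniformly off-axis, and that actually passes through a known point of $K_1\times K_2$ (namely $b$ when $p=a$, or the point $p$ came from); granting the realization lemma of Step~3, $A\cap(K_1\times K_2)$ contains a Cantor set, and it lies in $N_b\subseteq\mathcal A$ (resp.\ $N_a$). Thus every point of $\mathcal A\cap N_a$ has a Cantor set of distance-$t$ relatives in $\mathcal A\cap N_b$ and symmetrically, which is exactly what Step~1 needs. (In the remaining case, where every realizing pair for $t$ is axis-parallel, one first produces a non-axis-parallel realizing pair: this forces $t\in\Delta(\widetilde K_1)\cup\Delta(\widetilde K_2)$, and one steals a small distance from the other, uncountable, thick difference set; alternatively one runs the same mechanism with $N_a,N_b$ replaced by neighbourhoods of two opposite corners of the box.)

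\emph{Step 3: the realization lemma.} The content is: if $A$ is a sub-arc of a circle contained in the bounding box, with tangent direction uniformly bounded away from both coordinate axes, and $A$ passes through (or sufficiently near, in a sense quantified by the thicknesses) a point of $K_1\times K_2$, then $A\cap(K_1\times K_2)$ contains a Cantor set of positive thickness. Write $A$ as a graph $z_1=\Phi(z_2)$, $z_2\in J$, where $\Phi\in C^2$ has $|\Phi'|$ bounded above and below and $|\Phi''|$ bounded (possible precisely because $A$ avoids the horizontal- and vertical-tangent points of the circle). Then $A\cap(K_1\times K_2)=\{(\Phi(z_2),z_2):z_2\in J\cap K_2,\ \Phi(z_2)\in K_1\}$, so one must show $\Phi(J\cap K_2)\cap K_1$ contains a Cantor set. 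Pick bridge-portions $P_2$ of $K_2$ and $P_1$ of $K_1$, near the relevant point and inside $J$, with $\tau(P_j\cap K_j)\ge\tau(K_j)$; on a sufficiently short $P_2$ the nearly-affine map $\Phi$ distorts thickness by an arbitrarily small amount, so $\tau(\Phi(P_2\cap K_2))\ge\tau(K_2)-\varepsilon$. Since $\tau(K_1)\tau(K_2)>1$ (indeed $>5$, by the Hunt--Kan--Yorke hypothesis) and the Hunt--Kan--Yorke condition is open and monotone, the pair $(\tau(P_1\cap K_1),\tau(\Phi(P_2\cap K_2)))$, in the appropriate order, still satisfies it; choosing $P_1,P_2$ so that $\Phi(P_2\cap K_2)$ and $P_1\cap K_1$ are interleaved, Lemma~\ref{HKY} produces a positive-thickness Cantor set inside $\Phi(P_2\cap K_2)\cap(P_1\cap K_1)$, which pulls back under $z_2\mapsto(\Phi(z_2),z_2)$ to the desired Cantor set in $A\cap(K_1\times K_2)$. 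Finally, applying Theorem~\ref{STthm} to $\widetilde K_1,\widetilde K_2$ (legitimate since $\tau(\widetilde K_1)\tau(\widetilde K_2)\ge\tau(K_1)\tau(K_2)>1$) shows $\Delta_x(\widetilde K_1\times\widetilde K_2)\subseteq\Delta(\widetilde K_1\times\widetilde K_2)$ has nonempty interior for any $x\in\widetilde K_1\times\widetilde K_2$, giving the ``in particular''.

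\emph{Expected main obstacle.} The delicate point is the interleaving/positioning step in Step~3: one must situate the portions $P_1,P_2$ so that the diffeomorphic image $\Phi(P_2\cap K_2)$ genuinely interleaves $K_1$ (rather than merely touching it at an extreme point of a portion) while the thickness loss incurred by $\Phi$ is small enough that the Hunt--Kan--Yorke inequalities survive. This is exactly what the two hypotheses on $\widetilde K_j$ buy: $\tau(\widetilde K_j)\ge\tau(K_j)$ keeps every thickness above the Hunt--Kan--Yorke threshold, and the omission of the extreme points of $K_j$ places the realizing pair in the interior of the bounding box, so the small neighbourhoods $N_a,N_b$ (hence the arcs $A$) stay inside the box and there is room on both sides of $a_j,b_j$ to choose interleaving portions. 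A secondary bookkeeping nuisance is the axis-parallel case for $t$, which I would dispatch by the short perturbation argument indicated in Step~2.
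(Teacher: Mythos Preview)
Your overall architecture is the paper's: parametrize a short arc of the $t$-circle as a graph $z_1=\Phi(z_2)$, push a thick portion of $K_2$ through $\Phi$ with negligible thickness loss, and invoke Lemma~\ref{HKY} to get a Cantor set in the intersection; then iterate along the tree. The ``in particular'' via Theorem~\ref{STthm} is also exactly what the paper does. Where your write-up diverges is in the bookkeeping of Step~2, and there is a genuine gap there.

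You set $\mathcal A:=(N_a\cup N_b)\cap(K_1\times K_2)$ and then assert that \emph{every} $p\in\mathcal A\cap N_a$ has a Cantor set of distance-$t$ relatives in $\mathcal A\cap N_b$. Your justification is that the arc of $S(p,t)$ near $b$ ``actually passes through a known point of $K_1\times K_2$ (namely $b$ when $p=a$, or the point $p$ came from)''. But a generic $p\in\mathcal A$ did not \emph{come from} anything; it is just an arbitrary point of $K_1\times K_2$ in $N_a$, and for such $p$ the arc near $b$ need not meet $K_1\times K_2$ at all. Passing ``close to $b$'' is not enough: if $b_1$ happens to be, say, the right endpoint of a gap of $K_1$, an arc sliding slightly to the left of $b$ can miss $K_1\times K_2$ entirely, and nothing in your hypotheses on $\widetilde K_j$ forbids $b_1$ (or $b_2$) from being a gap endpoint of $K_j$. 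So either $\mathcal A$ is too large for the claimed self-reproduction property, or---if you secretly mean $\mathcal A$ to be only the recursively generated points---Step~1's reduction to a fixed $\mathcal A$ no longer applies as written, and you must in any case explain why the interleaving needed in Step~3 can be arranged at \emph{every} stage of the recursion, not just the first.

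The paper resolves exactly this difficulty with its notion of \emph{special point}: a point $x=(x_1,x_2)$ with $x_1$ not a gap endpoint of $K_1$ and $x_2$ not an extreme point of $K_2$. First (Lemma~\ref{special}) it shows, by applying Lemma~\ref{HKY} to $K_1$ against a translate of itself, that every $t\in\Delta(\widetilde K_1\times\widetilde K_2)$ is realized by a pair of special points; this is the step your ``axis-parallel'' aside gestures at but does not carry out. Then (Lemma~\ref{star}) it proves the star-shaped version of your realization lemma under the hypothesis that the pivot is special, which is precisely what makes the interleaving automatic: a non-gap-endpoint coordinate has Cantor pieces on both sides, so the pushed-forward Cantor set genuinely straddles the other one. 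Finally, since uncountably many of the produced points are special and only countably many need to be avoided, the recursion can be run so that every vertex is special, and the interleaving hypothesis is available at every step. Your sketch would become correct if you replaced $\mathcal A$ by ``special points of $K_1\times K_2$'' and inserted the analogue of Lemma~\ref{special} to seed the recursion.
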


Theorem \ref{MT2} says that diagonal distance set of $K_1\times K_2$ will have non-empty interior whenever the original single-distance set of an appropriate subset has non-empty interior; Theorem \ref{STthm} simply gives us one way to recognize when this occurs.  
Note that Lemma \ref{bubble} guarantees the existence of $\widetilde{K}_1, \widetilde{K}_2$ satisfying the hypotheses of Theorem \ref{MT2}. In general, if $I$ is a bridge in the construction of a Cantor set, $K$, then $K\cap I$ is Cantor and $\tau(K) \le \tau(K\cap I)$.  

\begin{example}
Let $C_\alpha$ denote the middle $\alpha$-Cantor set obtained by removing the middle $\alpha$ portion of the unit interval $[0,1]$ and iterating; 
it is not hard to check that the thickness of such a set is $\frac{1 - \alpha}{2\alpha}$. 
If $\tau(C_\alpha)  > 1 + \sqrt{2}$, then the Hunt-Kan-Yorke (HKY) condition is satisfied by $C_\alpha$ paired with itself, which, rounding, holds when  
$\alpha \le 1/6$. So, $C_\frac{1}{6} \times C_\frac{1}{6}$ contains arbitrarily long trees with an interval worth of admissible gap lengths. 
Note that $C_{1/6}$ paired with itself satisfies both the Newhouse and HKY condition, while $C_{1/4}$ only satisfies the Newhouse condition. 
\end{example}


\begin{remark}
Our method of proof relies on the intersection of two Cantor sets containing many points, which is guaranteed by the HKY condition.  
It is not clear if the assumption $\tau(K_1)\cdot \tau(K_2)>1$ is sufficient for any tree in light of the fact that, in general, such sets could intersect in a single point (see \cite{William}). 
\end{remark}

\section{Proofs}
Our proof strategy is similar to the one used in \cite{ST20} and \cite{MT21}.  The idea is to translate the statement that a distance is achieved into the statement that sets $K_2$ and $g(K_1)$ intersect for an appropriate function $g$.  In order to get an intersection point, we first need to ensure that the relevant thickness condition is satisfied, which means we need to understand how thickness changes under mapping.  It is easy to see that if $G$ is a gap of $K_1$, then $g(G)$ is a gap of $g(K_1)$.  However, if $B$ is the corresponding bridge, it does not follow that $g(B)$ is also a bridge.  This is because $g$ can distort lengths, so there may be a gap $G'\subset B$ which satisfies $|G'|<|G|$ but $|g(G')|>|g(G)|$.  However, the key observation is that if we work locally, small intervals which are close together should be distorted by a similar amount.  Therefore, thickness is ``locally almost preserved''.  The following lemma, a variant of \cite[Lemma 3.8]{ST20} and \cite[Lemma 3.4]{MT21}, makes this idea precise.
\begin{lem}[Thickness is locally almost preserved by smooth maps]
\label{image}
Let $K$ be a Cantor set, and let $I$ be an interval such that $\widetilde{K}:=K\cap I$ is a Cantor set with thickness $\tau(\widetilde{K})\geq \tau(K)$.  For any $0<c<1$, there exists $\e>0$ such that if $g$ is a continuously differentiable monotone function on $I$ which satisfies
\[
\left|\frac{|g'(x)|}{|g'(y)|}-1\right|<\e
\]
for all $x,y\in I$, then
\[
\tau(g(\widetilde{K}))>c\tau(K).
\]
\end{lem}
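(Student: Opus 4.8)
The plan is to track, gap by gap, how the thickness quotient $|B(u)|/|G|$ transforms under $g$, using that a $C^1$ map whose derivative varies by a factor close to $1$ distorts nearby lengths by nearly the same amount. First I would record the setup: the hypothesis $\left|\frac{|g'(x)|}{|g'(y)|}-1\right|<\e$ for all $x,y\in I$ forces $g'$ to be nowhere zero on $I$ (indeed it forces $\frac{|g'(y)|}{1+\e}<|g'(x)|<(1+\e)|g'(y)|$), so $g$ is strictly monotone; I will assume it is increasing, the decreasing case being identical after interchanging the roles of left and right endpoints. Then $g$ is an order-preserving homeomorphism of $I$, so $g(\widetilde{K})$ is a Cantor set whose bounded gaps are exactly the images $g(\gamma)$ of the bounded gaps $\gamma$ of $\widetilde{K}$, and whose gap endpoints are the $g$-images of the gap endpoints of $\widetilde{K}$. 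Writing $m:=\inf_I|g'|\in(0,\infty)$ and $M:=\sup_I|g'|\in(0,\infty)$, we have $M/m\le 1+\e$, and the mean value theorem gives $m(q-p)\le|g(q)-g(p)|\le M(q-p)$ for every subinterval $[p,q]\subseteq I$.

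The core estimate is a lower bound on the length of the bridge of $g(\widetilde{K})$ at $g(u)$, for each bounded gap $G$ of $\widetilde{K}$ with, say, right endpoint $u$. Let $B(u)=[u,a]$ be its bridge, so that every gap $(c,d)$ of $\widetilde{K}$ with $u<c<a$ satisfies $d-c<|G|$, and $a-u=\tau(\widetilde{K},u)|G|\ge\tau(\widetilde{K})|G|$. Let $B(g(u))=[g(u),a']$. If $a'\ge g(a)$, then immediately $|B(g(u))|\ge g(a)-g(u)\ge m(a-u)\ge m\,\tau(\widetilde{K})|G|$. Otherwise $a'<g(a)$, so $a'=g(c_0)$ where $(c_0,d_0)$ is the leftmost gap of $\widetilde{K}$ with $c_0>u$ and $g(d_0)-g(c_0)\ge|g(G)|$; since $a'<g(a)$ we get $c_0<a$, hence $d_0-c_0<|G|$ by the bridge property of $B(u)$, while $|g(G)|\le g(d_0)-g(c_0)\le M(d_0-c_0)$ together with $|g(G)|\ge m|G|$ gives $d_0-c_0\ge\frac{m}{M}|G|\ge\frac{|G|}{1+\e}$. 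I would then apply the thickness of $\widetilde{K}$ at the \emph{left} endpoint $c_0$: its bridge $B(c_0)=[\alpha,c_0]$ has length $\ge\tau(\widetilde{K})(d_0-c_0)$ and contains no gap of $\widetilde{K}$ of length $\ge d_0-c_0$; since $G$ is a gap of length $|G|>d_0-c_0$ lying to the left of $c_0$, it cannot be contained in $[\alpha,c_0]$, which forces $\alpha\ge u$. Therefore $c_0-u\ge c_0-\alpha\ge\tau(\widetilde{K})(d_0-c_0)\ge\frac{\tau(\widetilde{K})}{1+\e}|G|$, so $|B(g(u))|=g(c_0)-g(u)\ge m(c_0-u)\ge\frac{m\,\tau(\widetilde{K})}{1+\e}|G|$. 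In both cases $|B(g(u))|\ge\frac{m\,\tau(\widetilde{K})}{1+\e}|G|$, the (easy) degenerate situations in which $B(u)$ or $B(g(u))$ runs to the maximum of the relevant Cantor set being handled identically with that maximum in place of $a$ or $g(a)$.

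Since $|g(G)|\le M|G|$, this gives $\tau(g(\widetilde{K}),g(u))=\frac{|B(g(u))|}{|g(G)|}\ge\frac{m}{M}\cdot\frac{\tau(\widetilde{K})}{1+\e}\ge\frac{\tau(\widetilde{K})}{(1+\e)^2}\ge\frac{\tau(K)}{(1+\e)^2}$, the last inequality being the hypothesis $\tau(\widetilde{K})\ge\tau(K)$. Taking the infimum over all gap endpoints $u$ of $\widetilde{K}$ — which are in bijection with the gap endpoints of $g(\widetilde{K})$ via $g$ — yields $\tau(g(\widetilde{K}))\ge\frac{\tau(K)}{(1+\e)^2}$. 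It therefore suffices to fix at the outset $\e>0$ with $(1+\e)^2<1/c$, e.g. any $\e<\frac{1}{\sqrt{c}}-1$, which is positive because $c<1$.

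The step I expect to be the main obstacle is the second case above: a gap of $\widetilde{K}$ that is too short to terminate the bridge at $u$ may, after applying $g$, become just long enough to terminate the bridge at $g(u)$, and a priori this could collapse $B(g(u))$ to something far shorter than $g(B(u))$. Ruling this out — by using the thickness of $\widetilde{K}$ at the offending gap's own left endpoint to bound how close that gap can sit to $u$ — is the only point that goes beyond the naive heuristic that ``$g$ rescales every length by essentially the same factor''; the remainder is routine bookkeeping with the mean value theorem.
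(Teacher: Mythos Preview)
Your proof is correct and takes a genuinely different route from the paper's. The paper introduces an auxiliary notion of \emph{$\e$-thickness} $\tau_\e$, in which the bridge at $u$ is terminated by the first gap of length exceeding $(1-\e)|G|$ rather than $|G|$; it then shows $g(B_\e(u))\subset B_{\e^2}(g(u))$ and deduces $\tau(g(\widetilde K))\ge\tau_{\e^2}(g(\widetilde K))\ge(1-\e)\tau_\e(\widetilde K)$, concluding via the (unproved but standard) convergence $\tau_\e(K)\nearrow\tau(K)$. Your argument avoids this auxiliary notion entirely: when a gap $(c_0,d_0)\subset B(u)$ becomes long enough after applying $g$ to terminate $B(g(u))$ early, you use the thickness of $\widetilde K$ at the \emph{other} endpoint $c_0$ to bound $c_0-u$ from below, which is a neat trick the paper does not use. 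The payoff is an explicit quantitative bound $\tau(g(\widetilde K))\ge\tau(K)/(1+\e)^2$ and an explicit admissible $\e$, whereas the paper's approach is non-explicit (it depends on how fast $\tau_\e(K)\to\tau(K)$) but perhaps generalizes more readily to settings where one wants to compare $\e$-thicknesses directly.
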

\begin{proof}[Proof of Lemma \ref{image}]
Fix $0<c<1$, and let $I$ and $g$ satisfy the hypotheses of the theorem for some $\e>0$ to be determined later.  We must show that $\tau(g(\widetilde{K}))>c\cdot \tau(K)$ if $\e$ is sufficiently small.  The main idea behind the proof is the notion of ``$\e$-thickness'' defined in \cite[Definition 3.6]{ST20}.  Given a gap $G$ of a Cantor set $K$ with right endpoint $u$, let $(a,b)$ be the closest gap to $G$ which satisfies $a>u$ and 
$b-a>(1-\e)|G|$.
  The interval $[u,a]$ is called the $\e$-bridge of $K$ at $u$, denoted $B_\e(u)$.  The $\e$-bridge at a left endpoint is defined analogously.  The $\e$-thickness of $K$ at $u$ is 
\[
\tau_\e(K,u):=\frac{|B_\e(u)|}{|G|},
\]
and the $\e$-thickness of $K$ is $\tau_e(K):=\inf_u \tau_\e(K,u)$.  Thus, the thickness $\tau(K)$ is simply the $0$-thickness, and $\tau_\e(K)\nearrow \tau(K)$ as $\e\searrow 0$.  By the mean value theorem, for any subinterval $J\subset I$ there exists $x_J\in J$ with $|g(J)|=|g'(x_J)|\cdot |J|$.  Let $G$ be a bounded gap of $\widetilde{K}$ with endpoint $u$.  If $H\subset B_\e(u)$ is another gap, the by definition we have $|H|\leq(1-\e)|G|$. 
 It follows that
\begin{align*}
|g(H)|&=|H|\cdot |g'(x_H)| \\
&\leq (1-\e)|G|\cdot \frac{|g'(x_H)|}{|g'(x_G)|}\cdot |g'(x_G)| \\
&<(1-\e)|G|\cdot (1+\e)\cdot |g'(x_G)| \\
&=(1-\e^2)|g(G)|.
\end{align*}
Since any gap in $g(B_\e(u))$ is of the form $g(H)$ for some gap $H$ in $B_\e(u)$, this calculation shows that $g(B_\e(u))$ is contained in the $\e^2$-bridge at $g(u)$: 
\[
g(B_\e(u)) \subset B_{\eps^2}(g(u)).
\]
Therefore, the $\e^2$-thickness of $g(\widetilde{K})$ at $g(u)$ satisfies
\begin{align*}
\tau_{\e^2}(g(\widetilde{K}),g(u))&=\frac{|B_{\e^2}(g(u))|}{|g(G)|} \\
&\geq \frac{|g(B_\e(u))|}{|g(G)|} \\
&=\frac{|B_\e(u)|}{|G|}\cdot\frac{|g'(x_{B_\e(u)})|}{|g'(x_G)|} \\
&>\tau_\e(\widetilde{K},u)(1-\e).
\end{align*}
Taking infima over $u$, we get $\tau_{\e^2}(g(\widetilde{K}))\geq \tau_\e(\widetilde{K})(1-\e)$.  Since $\tau(g(\widetilde{K}))\geq \tau_{\e^2}(g(\widetilde{K}))$ for any $\e>0$ (this is not specific to $\widetilde{K}$, thickness is always greater than $\e$-thickness by definition), it follows that 
\[
\tau(g(\widetilde{K}))\geq \tau_\e(\widetilde{K})(1-\e)\geq \tau_\e(K)(1-\e) .
\]
Since $\tau_\e(K)(1-\e)\to \tau(K)$ as $\e\to 0$, we have $\tau_\e(K)(1-\e)>c\tau(K)$ if $\e$ is sufficiently small.
\end{proof}
The key to applying the lemma in our context is, given sets $K_1$ and $K_2$ and function $g$, to choose a small interval $I$ and consider $\widetilde{K}_1:=K_1\cap I$, where $I$ is as in the hypotheses of Lemma \ref{image}.  Since the Hunt-Kan-Yorke condition is monotone, the new sets will satisfy the condition also.  Finally, since the Hunt-Kan-Yorke condition is an ``open condition,'' $g(\widetilde{K}_1)$ and $\widetilde{K}_2$ will satisfy it as well, provided $c$ is sufficiently close to $1$.  Unfortunately, it is not true in general $\tau(K\cap I)\geq \tau(K)$ for an arbitrary interval $I$.  
For example, the middle-third Cantor set has thickness equal to $1$, but its intersection with $[\frac{2}{9}, \frac{7}{9}]$  has thickness strictly less than $1$. 
However, this property does always hold if $I$ is replaced with a carefully chosen subinterval $J$.  More precisely, we have the following lemma.
\begin{lem}
\label{bubble}
Let $K\subset \R$ be a Cantor set, let $x\in K$, and let $I\subset \R$ be an open interval containing $x$.  There exists a compact interval $J\subset I$, also containing $x$, such that $K':=K\cap J$ is a Cantor set satisfying $\tau(K')\geq \tau(K)$.  If $x$ is a gap endpoint of $K$, then $J$ may be chosen so that $x$ is a gap endpoint of $K'$.  If $x$ is not a gap endpoint of $K$, then $J$ may be chosen so that $x$ is not a gap endpoint of $K'$.
\end{lem}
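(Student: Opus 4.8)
The plan is to reduce the lemma to one geometric observation. Call a compact interval $[p,q]$ with $p,q\in K$ \emph{bridge-like} if there are gaps $\Gamma_\ell$ of $K$ ending at $p$ and $\Gamma_r$ of $K$ beginning at $q$ (allowing the unbounded gaps $(-\infty,\min K)$ and $(\max K,\infty)$) such that every gap of $K$ contained in $(p,q)$ has length at most $\min(|\Gamma_\ell|,|\Gamma_r|)$. The observation is: if $[p,q]$ is bridge-like then $K\cap[p,q]$ is a Cantor set and $\tau(K\cap[p,q])\ge\tau(K)$. I would prove this directly: compactness and total disconnectedness are clear, and $K\cap[p,q]$ is perfect since $p$ is a limit of $K$ from the right and $q$ from the left (each being a gap endpoint or an extreme point of $K$). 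The bounded-gap endpoints of $K\cap[p,q]$ are precisely the bounded-gap endpoints $v\in(p,q)$ of $K$. For such a $v$, say the right endpoint of a gap $H\subseteq(p,q)$, the bridge $B_K(v)=[v,a']$ ends at the left endpoint $a'$ of the first gap of $K$ of length $\ge|H|$ to the right of $v$; since $\Gamma_r$ is such a gap and starts at $q$, we get $a'\le q$, so $B_K(v)\subseteq[p,q]$. Moreover every gap of $K$ inside $[v,a']$ is a gap of $K\cap[p,q]$ and conversely (with $(q,\infty)$ playing the role of $\Gamma_r$ when $a'=q$), so the bridge of $K\cap[p,q]$ at $v$ equals $B_K(v)$ and $\tau(K\cap[p,q],v)=\tau(K,v)\ge\tau(K)$; the left-endpoint case is symmetric, and taking the infimum finishes it.

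Now I construct $J$ by cases. \textbf{Case 1: $x$ is not a gap endpoint.} Then gaps of $K$ accumulate to $x$ from both sides: if, say, $(x-\delta,x)$ contained no gap of $K$, one checks that $(d,x)\subseteq K$ for some $d<x$, contradicting that $K$ has empty interior. Fix $\delta>0$ with $[x-\delta,x+\delta]\subset I$. Among the gaps of $K$ contained in $(x-\delta,x)$ — of which only finitely many exceed any given length — choose one, $\Gamma_\ell^{0}$, of maximal length, and similarly choose $\Gamma_r^{0}\subseteq(x,x+\delta)$ of maximal length, taking $\Gamma_r^{0}$ leftmost among these; set $m=\min(|\Gamma_\ell^{0}|,|\Gamma_r^{0}|)$ and assume $m=|\Gamma_r^{0}|$ (the other case is symmetric). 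Let $q$ be the left endpoint of $\Gamma_r^{0}$, and let $\Gamma_\ell$ be the rightmost gap of $K$ of length $\ge m$ whose right endpoint $p$ lies left of $x$. Since $\Gamma_\ell^{0}$ is one such gap, $p>x-\delta$, hence $[p,q]\subseteq[x-\delta,x+\delta]\subset I$ and $x\in(p,q)$. By the choice of $p$, every gap of $K$ in $(p,x)$ has length $<m$, and by the choice of $q$, every gap in $(x,q)$ has length $<m=\min(|\Gamma_\ell|,|\Gamma_r^{0}|)$; thus $[p,q]$ is bridge-like, and $J:=[p,q]$ has the required properties, with $x$ interior to $J$ so that $x$ is not a gap endpoint of $K\cap J$.

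\textbf{Case 2: $x$ is a gap endpoint}, say the right endpoint of a gap $G=(g_0,x)$ (the other case is symmetric). Let $[d_0,g_0]=B_K(g_0)$ and $[x,a_0]=B_K(x)$, so the gap of $K$ ending at $d_0$ and the gap beginning at $a_0$ (possibly the unbounded ones) both have length $\ge|G|$. Put $J:=[d_0,a_0]$. Then $d_0<g_0<x<a_0$, so $G\subseteq(d_0,a_0)$ and $x$ is still the right endpoint of a gap of $K\cap J$, hence a gap endpoint of $K'$. Every gap of $K$ inside $(d_0,a_0)$ is either $G$ (length $|G|$) or lies in $(d_0,g_0)$ or $(x,a_0)$, and by the defining property of the bridges $B_K(g_0),B_K(x)$ the latter gaps have length $<|G|$; so all gaps inside $(d_0,a_0)$ have length $\le|G|$, which makes $[d_0,a_0]$ bridge-like and gives $\tau(K\cap J)\ge\tau(K)$. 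Here $|J|=(\tau(K,g_0)+1+\tau(K,x))\,|G|$; containing an interval this long is in fact necessary for any $J$ that keeps $x$ a gap endpoint while preserving thickness, and this is what is meant and used: $I$ is assumed large enough to contain $[d_0,a_0]$, which is automatic in the intended applications, where $G$ is small relative to $I$.

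I expect the main point to get exactly right to be \textbf{the coordination between the two sides in Case 1}: one cannot just use the bridges hanging off $\Gamma_\ell^{0}$ and $\Gamma_r^{0}$, since a gap between $\Gamma_\ell^{0}$ and $x$ could be longer than $\Gamma_r^{0}$; this is why $p$ is taken to be the right endpoint of the first gap of length $\ge m$ encountered going left from $x$, rather than the endpoint of $\Gamma_\ell^{0}$ itself.
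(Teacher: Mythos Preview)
Your bridge-like framework is clean and correct, and your Case~1 (when $x$ is not a gap endpoint) is essentially the paper's argument with the two sides coordinated in exactly the way the paper does it.

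The problem is Case~2. Your interval $J=[d_0,a_0]$ has a fixed size determined by $K$ alone: it contains the entire gap $G$ together with both adjacent bridges. Nothing in the hypothesis forces $I$ to be that large; the lemma is stated for \emph{every} open interval $I\ni x$, and if $I=(x-\e,x+\e)$ with $\e<|G|$ your $J$ simply does not fit. You acknowledge this and claim that ``containing an interval this long is in fact necessary for any $J$ that keeps $x$ a gap endpoint while preserving thickness.'' That claim is false. The paper takes $J=[x,y]$ lying entirely to the \emph{right} of $x$, with $y$ the left endpoint of a suitably chosen gap $G'$ to the right of $x$ inside $I$. Then $x=\min K'$, so $x$ is the right endpoint of the unbounded gap $(-\infty,x)$ of $K'$---still a gap endpoint in the sense of the definition---and $J$ can be made as short as one likes by taking $G'$ close to $x$. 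In your own language, $[x,y]$ is bridge-like with $\Gamma_\ell=G$ and $\Gamma_r=G'$, so your general observation already proves $\tau(K\cap[x,y])\ge\tau(K)$ once you make this choice.

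What your construction in Case~2 actually achieves is the stronger conclusion that $x$ is a \emph{bounded} gap endpoint of $K'$; for that, your necessity remark is correct, but the lemma does not ask for it. You are right that only Case~1 is invoked in the applications, so nothing downstream breaks, but as a proof of the lemma as stated, Case~2 needs the one-sided choice $J=[x,y]$ rather than $J=[d_0,a_0]$.
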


\begin{proof}
First assume $x$ is an endpoint of a gap $G$; without loss of generality suppose $x$ is the right endpoint of $G$.  Since gaps are disjoint, the interval $I$ can intersect only finitely many gaps to the rigth of $x$ of length at least $|G|$.  Let $G'$ be the leftmost such gap (if all bounded gaps to the right of $x$ are smaller than $G$, let $G'$ be a gap intersecting $I$ of maximal length).  Let $y$ be the left endpoint of $G'$, let $J=[x,y]$, and let $K'=J\cap K$.  Clearly $K'$ is a Cantor set, so it remains to show $\tau(K')\geq \tau(K)$.  Since all bounded gaps of $K'$ are smaller than $|G|$ and $|G'|$ the bridge at any gap endpoint $u$ with respect to $K'$ is the same as the bridge at $u$ with respect to $K$.  It follows that $\tau(K,u)=\tau(K',u)$ for each bounded gap endpoint $u$ of $K'$.  Taking infima over bounded gap endpoints of $K'$ on the right and the larger class of bounded gap endpoints of $K$ on the left gives $\tau(K)\leq \tau(K')$. \\

Now, suppose $x$ is not a gap endpoint of $K$.  Let $G_L$ be a gap to the left of $x$ which is contained in $I$, and let $G_R$ be a gap to the right of $x$ which is contained in $I$.  Let $\ell=\min(|G_L|,|G_R|)$.  There are at most finitely many gaps between $G_L$ and $x$ of length at least $\ell$; let $G_L'$ be the rightmost such gap, and let $u$ be its right endpoint (if there are no such gaps, take $G_L'=G_L$.  Define $G_R'$ similarly, and let $v$ be its left endpoint.  Let $J=[u,v]$, and let $K'=K\cap J$.  As in the previous case, since each bounded gap of $K'$ is smaller than $|G_L'|$ and $|G_R'|$, the bridges with respect to $K'$ coincide with the bridges with respect to $K$, and therefore $\tau(K)\leq \tau(K')$.
\end{proof}
We are now ready to begin constructing trees of constant gap length.  Our first lemma requires a little more terminology.
\begin{defn}
Let $K_1,K_2\subset \R$ be Cantor sets, and assume $\tau(K_1)\geq \tau(K_2)$.  A point $x=(x_1,x_2)\in K_1\times K_2$ is called a \textbf{special point} if $x_1$ is not a gap endpoint of $K_1$, and $x_2$ is not the maximum or minimum point of $K_2$.
\end{defn}
Note that if $x_1 \in K_1$ is \textit{not} an endpoint of a gap, then it is both a left and right-limit point of $K_i$.  Moreover, it is also a left or right limit of left or right gap endpoints of $K_i$.  This ensures that when we define $\widetilde{K}_1:=K_1\cap I$ for some interval $I$, we can assume $x_1$ is not the maximum or minimum point of $\widetilde{K}_1$.  The following lemma states that, given arbitrary points $x, y \in K_1\times K_2$, provided $x$ is special, then the circle about $y$ through $x$ contains infinitely many points of $K_1 \times K_2$.  This allows us to generate arbitrarily large ``star-shaped graphs''.

\begin{lem}[Star-shaped graph lemma]
\label{star}
Let $K_1,K_2$ be Cantor sets satisfying the Hunt-Kan-Yorke condition.  Let $y\in\R^2$, let $x\in K_1\times K_2$ be a special point which does not share a coordinate with $y$, and let $t=|x-y|$.  Then, there are uncountably many $z\in K_1\times K_2$ such that $|y-z|=t$.  
In particular, there are uncountably many special points, $z$, such that $|z-y|=t$
\end{lem}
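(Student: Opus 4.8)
The plan is to reduce the statement ``$|y-z|=t$ has uncountably many solutions $z\in K_1\times K_2$'' to an intersection statement about two Cantor sets on the line, and then apply the Hunt--Kan--Yorke intersection theorem (Lemma~\ref{HKY}) together with the local-thickness-preservation lemma (Lemma~\ref{image}) and the bridge-selection lemma (Lemma~\ref{bubble}). Write $y=(y_1,y_2)$ and $x=(x_1,x_2)$, so $t^2=(x_1-y_1)^2+(x_2-y_2)^2$. A point $z=(z_1,z_2)$ satisfies $|y-z|=t$ iff $z_2$ lies at signed distance $\pm\sqrt{t^2-(z_1-y_1)^2}$ from $y_2$; fixing the sign that makes $x$ itself a solution, this says $z_2=g(z_1)$ for the smooth function $g(s)=y_2+\sqrt{t^2-(s-y_1)^2}$ (or with a minus sign), which is well-defined and $C^1$ on any interval $I$ of $s$-values with $|s-y_1|<t$ strictly, i.e.\ away from the two points where the circle is vertical. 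Since $x$ does not share a coordinate with $y$, we have $0<|x_1-y_1|<t$ strictly, so $g$ is smooth near $x_1$; moreover $g'(x_1)\neq 0$ there (the circle is not horizontal at $x$ precisely because $x_1\neq y_1$), so $g$ is monotone on a small interval about $x_1$.

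The key step is to localize. Since $x$ is a special point, $x_1$ is not a gap endpoint of $K_1$; apply Lemma~\ref{bubble} to get a compact interval $J\ni x_1$, as small as we like, with $x_1$ not a gap endpoint of $K_1':=K_1\cap J$ and $\tau(K_1')\geq\tau(K_1)$. Choosing $J$ small enough and then invoking Lemma~\ref{image} with a constant $c$ close enough to $1$ (the $\e$ in that lemma forces $J$ small; on a short interval $g'$ varies little, so the ratio hypothesis $\bigl||g'(s)|/|g'(s')|-1\bigr|<\e$ holds), we obtain $\tau(g(K_1'))>c\,\tau(K_1)$. By monotonicity of the HKY condition and the fact that it is an open condition in the thickness parameters, the pair $(g(K_1'),\,K_2')$ still satisfies the HKY condition, where $K_2'$ is a suitable Cantor subset of $K_2$ obtained by restricting $K_2$ to a small interval around $x_2$ via Lemma~\ref{bubble} again (note $x_2$ is neither the max nor min of $K_2$, so such a nondegenerate restriction exists and avoids the endpoints, keeping $\tau(K_2')\geq\tau(K_2)$). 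One also checks $g(K_1')$ and $K_2'$ are interleaved: both contain the point $x_2=g(x_1)$ in (the interior of) each other's convex hull, since $x_1$ and $x_2$ are interior (non-extreme) points of $K_1'$ and $K_2'$ respectively, so $g(x_1)$ is interior to the convex hull of $g(K_1')$, and symmetrically.

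Now Lemma~\ref{HKY} gives a Cantor set $K\subset g(K_1')\cap K_2'$ with $\tau(K)>0$; in particular $K$ is uncountable. For each $w\in K$ there is a unique $z_1\in K_1'$ with $g(z_1)=w$ (as $g$ is injective on $J$), and then $z=(z_1,w)\in K_1\times K_2$ satisfies $|y-z|=t$ by construction; distinct $w$ give distinct $z$, so we get uncountably many such $z$, proving the first assertion. For the ``in particular'' clause, we must produce uncountably many of these $z$ that are themselves special points, i.e.\ with first coordinate not a gap endpoint of $K_1$ and second coordinate not an extreme point of $K_2$. The second coordinates $w$ range over the Cantor set $K$, which has at most two extreme points, so all but at most two $w$ avoid being the max/min of $K_2$ (indeed $K\subset K_2'$ and $K_2'$ was chosen to avoid the extremes of $K_2$, so every $w\in K$ is fine for the second condition). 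For the first coordinates: the set of $z_1$ arising is $g^{-1}(K)\subset K_1'$, an uncountable set, and the gap endpoints of $K_1$ are countable, so all but countably many of these $z_1$ are non-gap-endpoints of $K_1$. Intersecting these two co-countable conditions still leaves uncountably many valid $z$, which are special. The main obstacle is the bookkeeping in the localization: one must choose $J$ (around $x_1$) and the interval around $x_2$ small enough that (a) $g$ is defined, smooth, monotone, with nearly constant derivative there, and (b) the resulting thickness loss keeps the HKY condition, while simultaneously using Lemma~\ref{bubble} to avoid spoiling thickness or landing on gap endpoints — but each of these is handled by the lemmas already in hand, so the argument is a careful assembly rather than a new idea.
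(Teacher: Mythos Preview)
Your approach is essentially the paper's: parametrize the circle by $g(s)=y_2\pm\sqrt{t^2-(s-y_1)^2}$, localize $K_1$ near $x_1$ via Lemma~\ref{bubble}, use Lemma~\ref{image} to keep the HKY condition for $g(K_1')$ against the second Cantor set, verify interleaving, and apply Lemma~\ref{HKY}; the countability of non-special points on a function graph handles the ``in particular'' clause exactly as the paper does.

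There is one small wrinkle worth flagging. You also localize $K_2$ to a set $K_2'$ around $x_2$ and then assert that $x_2$ is a non-extreme point of $K_2'$ in order to check interleaving. But the special-point hypothesis only says $x_2$ is not the maximum or minimum of $K_2$; it may well be a (bounded) gap endpoint of $K_2$, and in that case Lemma~\ref{bubble} produces an interval $J_2$ having $x_2$ as an endpoint, so $x_2$ \emph{is} the max or min of $K_2'=K_2\cap J_2$. Your witness for ``$g(K_1')$ meets the interior of $\mathrm{conv}(K_2')$'' then fails. The fix is trivial and is exactly what the paper does: do not localize $K_2$ at all. With the full $K_2$, the point $x_2=g(x_1)$ lies in the interior of $\mathrm{conv}(K_2)$ by the special-point hypothesis, and $x_2\in K_2$ lies in the interior of $\mathrm{conv}(g(K_1'))$ because $x_1$ is not a gap endpoint (hence not extreme) in $K_1'$; this gives interleaving directly. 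The paper phrases this last step as a short contradiction argument (assuming the intersection is countable and then locating points of each set in distinct gaps of the other), but the content is the same.
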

\begin{figure}[h]
\centering
\begin{tikzpicture}
\draw[<->] (0,5)--(0,0)--(5,0);

\draw [fill] (2.967,3.657) circle [radius=0.1];
\draw [fill] (1,1.625) circle [radius=0.1];

\draw [blue] (2,1) arc [radius=2.828, start angle=250, end angle= 180];

\draw [thick] (1,-.1)--(1,.1);
\draw [thick] (-.1,1.625)--(.1,1.625);

\node[below] at (1,0) {$x_1$};
\node[left] at (0,1.625) {$x_2$};
\node[above right] at (1,1.625) {$x$};
\node[right] at (3.067,3.657) {$y$};
\node[below, red] at (.5,0) {$u_1$};
\node[left, red] at (0,2.274) {$g_{t,y}(u_1)$};
\node[below, red] at (1.5,0) {$v_1$};
\node[left, red] at (0,1.239) {$g_{t,y}(v_1)$};

\draw[dashed, red] (.5,0)--(.5,2.274);
\draw[dashed, red] (0,2.274)--(.5,2.274);
\draw[dashed, red] (1.5,0)--(1.5,1.239);
\draw[dashed, red] (0,1.239)--(1.5,1.239);

\end{tikzpicture}
\caption{Constructing the sets $\widetilde{K}_j$.}
\label{interleave}
\end{figure}
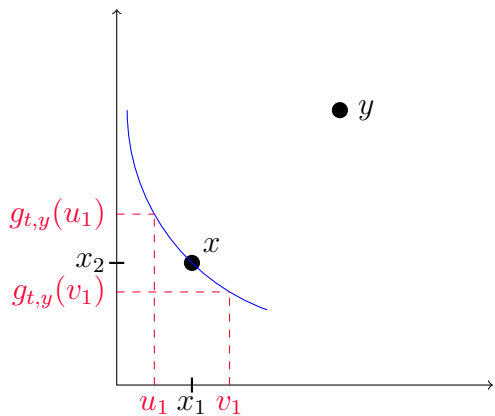

\begin{proof}
Assume $x_1<y_1$ and $x_2<y_2$, so the point $x=(x_1,x_2)$ is below and to the left of $y=(y_1,y_2)$.  Because thickness does not change under symmetries, there is no loss of generality in this assumption.  Define a function $g_{y,t}:(y_1-t,y_1+t)\to\R$ by
\[
g_{y,t}(s)=y_2-\sqrt{t^2-(s-y_1)^2},
\]
so that the graph of $g_{y,t}$ is the lower half circle centered at $y$ of radius $t$.  In particular, $g_{y,t}(x_1)=x_2$.  Note that $g_{y,t}$ is smooth, and the derivative $g_{y,t}'$ vanishes only at the point $y_1$.  By Lemma \ref{bubble}, we may choose an interval $J$ containing $x_1$ in its interior such that $\widetilde{K}_1:=K_1\cap J$ and $K_2$ satisfy the Hunt-Kan-Yorke thickness condition, and $x_1$ is not a gap endpoint of $\widetilde{K}_1$.  We may also assume $J$ is as small as needed.  Since $x_1$ is not a critical point of $g_{y,t}$, we may assume that $J$ satisfies the conditions of Lemma \ref{image}, and hence $g_{y,t}(\widetilde{K}_1)$ and $K_2$ satisfy the Hunt-Kan-Yorke condition as well.  Since $|y-z|=t$ whenever $g_{y,t}(z_1)=z_2$, the statement of the lemma follows from the claim that $g_{y,t}(\widetilde{K}_1)\cap K_2$ is uncountable.  To prove this claim, suppose this set is countable.  We will show $g_{y,t}(\widetilde{K}_1)$ and $K_2$ are interleaved, contradicting the Hunt-Kan-Yorke intersection theorem.  Since $x_1$ is not a gap endpoint of $K_1$, it is not the maximum or minimum of $\widetilde{K}_1$, hence the set $\widetilde{K}_1$ contains uncountably many points to both the left and right of $x_1$.  If $g_{y,t}(\widetilde{K}_1)$ intersects $K_2$ in only countably many places, then there exist $u_1,v_1\in \widetilde{K}_1$ such that $u_1<x_1<v_1$ and $g_{y,t}(u_1),g_{y,t}(v_1)\notin K_2$ (Figure \ref{interleave}).  Since $g_{y,t}$ is monotone decreasing in a neighborhood of $x_1$, we have $g_{y,t}(v_1)<x_2<g_{y,t}(u_1)$.  Since $x_2\in K_2$, each of the points $g_{y,t}(u_1)$ and $g_{y,t}(v_1)$ are contained in a different gap of $K_2$.  A similar argument shows that there are points $u_2,v_2\in K_2$ contained in different gaps of $g_{y,t}(\widetilde{K}_1)$.  The Hunt-Kan-Yorke intersection theorem (Lemma \ref{HKY}) then guarantees the intersection $g_{y,t}(\widetilde{K}_1)\cap K_2$ contains a Cantor set of positive thickness, contradicting countability. To prove the ``in particular" part of the lemma, observe that any graph of a function can contain at most countably many points of $K_1\times K_2$ that are not special points.

\end{proof}

To prove our main theorem, we will start with a single distance between two points and use it to build a tree one star-shaped graph at a time.  This will almost prove the theorem, which says that ``almost'' any distance is an admissable gap length for a tree.  However, our Lemma \ref{star} only allows us to use special points, so we have to ensure that we have not lost too many distances.  This is accomplished by the following lemma.

\begin{lem}
\label{special}
Let $K_1,K_2\subset \R$ be Cantor sets satsifying the Hunt-Kan-Yorke condition and assume $\tau(K_1)\geq \tau(K_2)$.  For $j=1,2$, let $\widetilde{K}_j\subset K_j$ be a Cantor set which does not contain the maximum or minimum of $K_j$ and satisfies $\tau(\widetilde{K}_j)\geq \tau(K_j)$, and let $\mathcal{S}$ be the set of special points of $K_1\times K_2$.  Then, $\Delta(\widetilde{K}_1\times\widetilde{K}_2)\subset \Delta(\mathcal{S})$.
\begin{proof}
Let $t\in \Delta(\widetilde{K}_1\times\widetilde{K}_2)$, say $t=|x-y|$ with $x,y\in \widetilde{K}_1\times\widetilde{K}_2$.  Let $r=x_1-y_1$ and $s=x_2-y_2$, so that $t^2=r^2+s^2$.  By assumption, $x_2$ and $y_2$ are not the maximum or minimum points of $K_2$, so to prove the lemma it suffices to prove there exist $x_1',x_2'\in K_1$ which are not gap endpoints and satisfy $x_1'-y_1'=r$, since the points $(x_1',x_2)$ and $(y_1',y_2)$ would be special points with distance $t$.  By monotonicity of the Hunt-Kan-Yorke condition and the assumption $\tau(K_1)\geq \tau(K_2)$, the set $K_1$ satisfies the Hunt-Kan-Yorke condition when paired with itself.  Finally, since $\widetilde{K}_1$ does not contain the maximum or minimum points of $K_1$, we have $|r|<(\max K_1-\min K_1)$.  Therefore, the Hunt-Kan-Yorke intersection theorem implies that $K_1$ and $r+K_1$ have uncountably many intersection points, meaning there are uncountably many pairs $(x_1',y_1')\in K_1\times K_1$ which satisfy the equation $x_1'=y_1'+r$.  Since each choice of $y_1'$ yields a different $x_1'$ and there are only countably many gap endpoints, we conclude that we may find a solution such that neither $x_1'$ nor $y_1'$ is a gap endpoint.
\end{proof}
\end{lem}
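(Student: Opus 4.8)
The plan is to take an arbitrary $t\in\Delta(\widetilde{K}_1\times\widetilde{K}_2)$, realized as $t=|x-y|$ with $x,y\in\widetilde{K}_1\times\widetilde{K}_2$, and manufacture a pair of \emph{special} points of $K_1\times K_2$ realizing the same distance. Writing $r=x_1-y_1$ and $s=x_2-y_2$ so that $t^2=r^2+s^2$, I first observe that the second coordinates are already harmless: since $\widetilde{K}_2$ omits the maximum and minimum of $K_2$, the values $x_2,y_2$ satisfy the second-coordinate requirement for special points. The only obstruction is that $x_1$ or $y_1$ might be a gap endpoint of $K_1$. Hence it suffices to find a replacement pair $(x_1',y_1')\in K_1\times K_1$ with the same difference $x_1'-y_1'=r$ and with neither entry a gap endpoint; then $(x_1',x_2)$ and $(y_1',y_2)$ are special points at distance $\sqrt{r^2+s^2}=t$.

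Finding such a pair amounts to understanding the intersection $K_1\cap(r+K_1)$, and the central move is to apply the Hunt--Kan--Yorke intersection theorem (Lemma \ref{HKY}) to $K_1$ and its translate $r+K_1$. Two hypotheses need checking. For the thickness condition, translation preserves thickness, so $\tau(r+K_1)=\tau(K_1)$; since $\tau(K_1)\geq\tau(K_2)$ and $(K_1,K_2)$ satisfy the HKY condition, monotonicity of that condition upgrades it to $(\tau(K_1),\tau(K_1))$, so $K_1$ paired with $r+K_1$ satisfies HKY. For the interleaving condition, I would use that $\widetilde{K}_1$ omits the maximum and minimum of $K_1$, so $x_1,y_1\in(\min K_1,\max K_1)$ and therefore $|r|<\max K_1-\min K_1$; this forces the convex hulls of $K_1$ and $r+K_1$ to overlap, and in fact the point $x_1$ already lies in $K_1\cap(r+K_1)$ and in the interior of both convex hulls, witnessing that the two sets are interleaved.

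With both hypotheses in place, Lemma \ref{HKY} produces a positive-thickness Cantor set inside $K_1\cap(r+K_1)$, hence uncountably many $x_1'\in K_1$ with $x_1'-r\in K_1$, i.e. uncountably many admissible pairs $(x_1',y_1')$ with $y_1'=x_1'-r$. The assignment $x_1'\mapsto y_1'$ is injective, and $K_1$ has only countably many gap endpoints, so discarding the countably many pairs in which $x_1'$ or $y_1'$ is a gap endpoint still leaves uncountably many valid pairs. Selecting any one of them finishes the construction and places $t$ in $\Delta(\mathcal{S})$.

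The step I expect to be the main obstacle is the interleaving verification, since Lemma \ref{HKY} is available only for interleaved sets: the whole argument hinges on converting the hypothesis ``$\widetilde{K}_1$ omits the extreme points of $K_1$'' into a genuinely two-sided overlap of $K_1$ with its translate $r+K_1$. Pinning down the strict inequality $|r|<\max K_1-\min K_1$ and confirming that each set meets the \emph{interior} (not merely the closure) of the other's convex hull is the delicate point; by comparison the thickness bookkeeping via monotonicity and the final cardinality count are routine.
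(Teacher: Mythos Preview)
Your proposal is correct and follows essentially the same route as the paper: reduce to finding $x_1',y_1'\in K_1$ with $x_1'-y_1'=r$ and neither a gap endpoint, then apply the Hunt--Kan--Yorke intersection theorem to $K_1$ and $r+K_1$, using monotonicity for the thickness hypothesis and the bound $|r|<\diam(K_1)$ for the geometric hypothesis, and finish with the countability argument. If anything, you are more careful than the paper about the interleaving verification (exhibiting $x_1$ as a common point in the interior of both convex hulls), which the paper leaves implicit.
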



\subsection{Proof of Theorem \ref{MT2}}

\begin{proof}[Proof of Theorem \ref{MT2}]
\begin{figure}[h!]
\begin{tikzpicture}
\draw[fill] (8.5,5) circle[radius=0.1];
\draw[fill] (2.5,2.5) circle[radius=0.1];
\draw[fill] (6.5,2.5) circle[radius=0.1];
\draw[fill] (10.5,2.5) circle[radius=0.1];
\draw[fill] (1,0) circle[radius=0.1];
\draw[fill] (2,0) circle[radius=0.1];
\draw[fill] (3,0) circle[radius=0.1];
\draw[fill] (5,0) circle[radius=0.1];
\draw[fill] (6,0) circle[radius=0.1];
\draw[fill] (7,0) circle[radius=0.1];
\draw[fill] (9,0) circle[radius=0.1];
\draw[fill] (10,0) circle[radius=0.1];
\draw[fill] (11,0) circle[radius=0.1];
\draw (8.5,5)--(2.5,2.5);
\draw (8.5,5)--(6.5,2.5);
\draw (8.5,5)--(10.5,2.5);

\draw (2.5,2.5)--(1,0);
\draw (2.5,2.5)--(2,0);
\draw (2.5,2.5)--(3,0);

\draw (6.5,2.5)--(5,0);
\draw (6.5,2.5)--(6,0);
\draw (6.5,2.5)--(7,0);

\draw (10.5,2.5)--(9,0);
\draw (10.5,2.5)--(10,0);
\draw (10.5,2.5)--(11,0);
\node[above right] at (8.5,5) {$\varnothing$};
\node[left] at (2.5,2.5) {$1$};
\node[left] at (6.5,2.5) {$2$};
\node[right] at (10.5,2.5) {$3$};
\node[below] at (1,0) {$(1,1)$};
\node[below] at (2,0) {$(1,2)$};
\node[below] at (3,0) {$(1,3)$};
\node[below] at (5,0) {$(2,1)$};
\node[below] at (6,0) {$(2,2)$};
\node[below] at (7,0) {$(2,3)$};
\node[below] at (9,0) {$(3,1)$};
\node[below] at (10,0) {$(3,2)$};
\node[below] at (11,0) {$(3,3)$};
\node at (1,-1) {$\vdots$};
\node at (2,-1) {$\vdots$};
\node at (3,-1) {$\vdots$};
\node at (5,-1) {$\vdots$};
\node at (6,-1) {$\vdots$};
\node at (7,-1) {$\vdots$};
\node at (9,-1) {$\vdots$};
\node at (10,-1) {$\vdots$};
\node at (11,-1) {$\vdots$};
\node at (3.5,0) {$\cdots$};
\node at (7.5,0) {$\cdots$};
\node at (11.5,0) {$\cdots$};
\node at (11.5,2.5) {$\cdots$};
\end{tikzpicture}
\caption{The tree $T^*$}
\label{Tstar}
\end{figure}
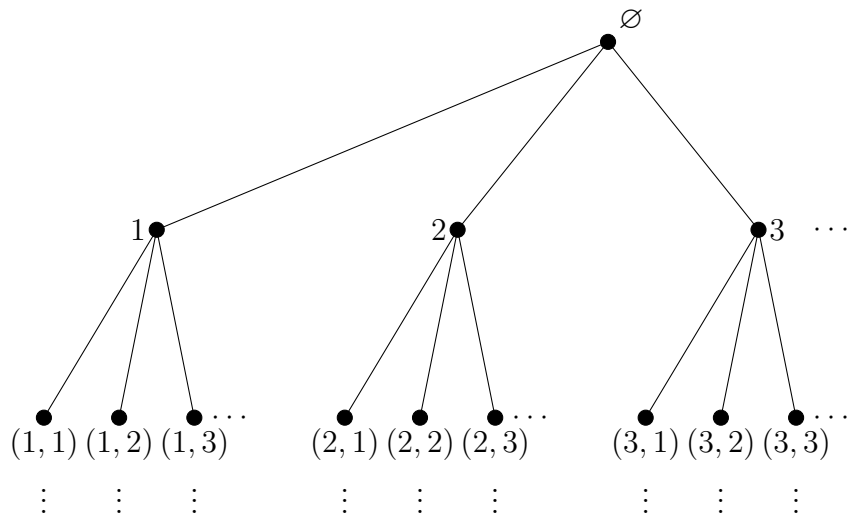 
Consider the tree $T^*$ with vertex set consisting of all finite sequences of positive integers (including the empty sequence, denoted $\varnothing$), with an edge between two sequences if and only if they are of the form $(i_1,\dots, i_n)$ and $(i_1,\dots,i_n,i_{n+1})$ (or $\varnothing$ and $(i_1)$) for $i_1,\dots,i_{n+1}\in\N$ (Figure \ref{Tstar}).  It is easy to see that every countable tree is a subgraph of $T^*$, so it suffices to prove the theorem in the case $T=T^*$.  To prove this case, for any non-negative integer $n$, let $L_n$ denote the $n$-th ``level'' of $T^*$, i.e., the set of sequences of lengh exactly $n$ (the empty sequence has length $0$).  For each $w\in L_n$, let $L_{n+1}(w)$ be the sequences in $L_{n+1}$ which are adjacent to $w$; explicitly, if $w=(i_1,\dots,i_n)$, then
\[
L_{n+1}(w)=\{(i_1,\dots,i_n,i_{n+1}):i_{n+1}\in\N\}.
\]
By lemma \ref{special}, to prove the theorem it suffices to show $\Delta(\mathcal{S})\subset \Delta_{T^*}'(K_1\times K_2)$, where $\mathcal{S}$ is the set of special points of $K_1\times K_2$.  Given $t\in \Delta(\mathcal{S})$, for each finite sequence $w$ we construct a point $x^w\in K_1\times K_2$ such that $|x^w-x^{w'}|=t$ whenever $w\sim w'$.  This construction is done by recursion on levels.  First, since $t\in \Delta(\mathcal{S})$ we have $t=|x^\varnothing-x^1|$ with $x^\varnothing,x^1$ special.  In particular, since $x^1$ is a special point, we may apply Lemma \ref{star} with $y=x^\varnothing,x=x^1$
to generate a sequence $x^2,x^3,\dots$ of distinct special points satisfying $|x^\varnothing-x^i|=t$ for every $i$ (Figure \ref{l1}).  This gives the construction of levels $L_0$ and $L_1$.
\begin{figure}[h!]
\begin{subfigure}[b]{0.45\linewidth}
\centering
\begin{tikzpicture}
\draw [fill] (2.967,3.657) circle [radius=0.1];
\draw [fill] (2,1) circle [radius=0.1];
\draw [fill] (1.5,1.239) circle [radius=0.1];
\draw [fill] (1,1.625) circle [radius=0.1];
\draw [fill] (.5,2.274) circle [radius=0.1];

\draw [blue] (2,1) arc [radius=2.828, start angle=250, end angle= 180];

\node[below left] at (2,1) {$x^1$};
\node[above right] at (3.067,3.657) {$x^\varnothing$};
\node[below left] at (1.5,1.239) {$x^2$};
\node[below left] at (1,1.625) {$x^3$};
\node[below left] at (.5,2.274) {$x^4$};

\draw (2.967,3.657)--(2,1);
\draw (2.967,3.657)--(1.5,1.239);
\draw (2.967,3.657)--(1,1.625);
\draw (2.967,3.657)--(.5,2.274);
\end{tikzpicture}
\caption{First level}
\label{l1}
\end{subfigure}
\begin{subfigure}[b]{0.45\linewidth}
\centering
\begin{tikzpicture}
\draw [fill] (2.967,3.657) circle [radius=0.1];
\draw [fill] (2,1) circle [radius=0.1];
\draw [fill] (1.5,1.239) circle [radius=0.1];
\draw [fill] (1,1.625) circle [radius=0.1];
\draw [fill] (.5,2.274) circle [radius=0.1];

\draw [fill] (3.5,2.948) circle [radius=0.1];
\draw [fill] (3.75,2.286) circle [radius=0.1];


\draw[red] (2.967,3.657) arc [radius=2.828, start angle=45.951, end angle= 0];

\node[below left] at (2,1) {$x^1$};
\node[above right] at (3.067,3.657) {$x^\varnothing$};
\node[below left] at (1.5,1.239) {$x^2$};
\node[below left] at (1,1.625) {$x^3$};
\node[below left] at (.5,2.274) {$x^4$};
\node[above right] at (3.5,2.948) {$x^{(3,1)}$};
\node[above right] at (3.75,2.286) {$x^{(3,2)}$};

\draw[dashed] (2.967,3.657)--(2,1);
\draw[dashed] (2.967,3.657)--(1.5,1.239);
\draw[dashed] (2.967,3.657)--(1,1.625);
\draw[dashed] (2.967,3.657)--(.5,2.274);

\draw (1,1.625)--(3.5,2.948);
\draw (1,1.625)--(3.75,2.286);
\end{tikzpicture}
\caption{Second level descendants of $x^3$}
\label{l2}
\end{subfigure}
\caption{Constructing a short tree}
\end{figure}
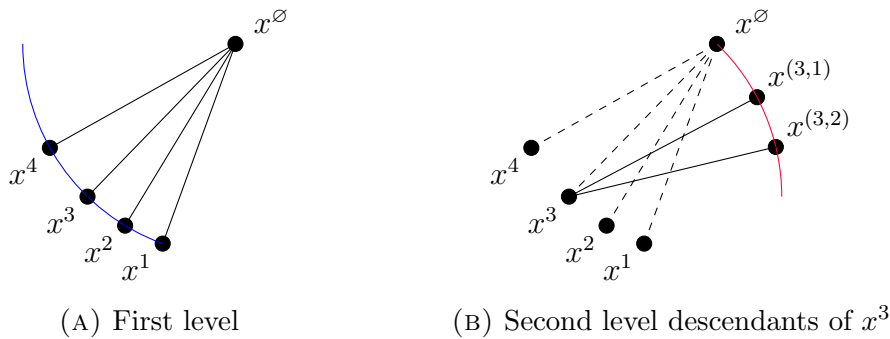  
Next, let $n\geq 1$ and suppose we have constructed distinct special points $x^w$ for every $w\in L_0 \cup\dots\cup L_n$ 
such that for any $w'\in L_{n-1}$ with $w\sim w'$, we have $|x^w-x^{w'}|=t$.  Fix some finite sequence $w=(i_1,\dots,i_n)$, and let $w'=(i_1,\dots,i_{n-1})\in L_{n-1}$.  Since $|x^{w}-x^{w'}|=t$ and $x^{w'}$ is special, we may apply Lemma \ref{star} with $x=x^{w'},y=x^w$ to obtain a sequence of distinct special points $x^{(w,1)},x^{(w,2)},\dots$ such that $|x^{w,i}-x^w|=t$ for every $i$.  Moreover, since Lemma \ref{star} gives uncountably many choices but we have only constucted countably many points so far, we may choose our new points to be distinct from all points at previous levels and not just from each other.  This allows us to construct $L_{n+1}(w)$, the points in level $n+1$ which are children of $w$.  Doing this for every $w\in L_n$ and ensuring no points are chosen more than once, we construct the entire level $L_{n+1}$.
\end{proof}

\bibliography{bibdatabase}
\bibliographystyle{abbrv}

\end{document}